\newtheorem{definition}{Definition}
\newtheorem{theorem}[definition]{Theorem}
\newtheorem{lemma}[definition]{Lemma}
\theoremstyle{definition}
\DeclareMathOperator{\idop}{\mathbbm{1}}
\DeclareMathOperator{\ZZ}{\mathbb{Z}}
\DeclareMathOperator{\RR}{\mathbb{R}}
\DeclareMathOperator{\CC}{\mathbb{C}}
\DeclareMathOperator{\EE}{\mathbb{E}}
\DeclareMathOperator{\GS}{\mathfrak{S}}
\DeclareMathOperator{\ee}{\mathrm{e}}
\newcommand*\diff{\mathop{}\!\mathrm{d}}
\newcommand{\stirlingone}[2]{\genfrac{[}{]}{0pt}{}{#1}{#2}}
\newcommand{\stirlingtwo}[2]{\genfrac{\lbrace}{\rbrace}{0pt}{}{#1}{#2}}
\renewcommand{\phi}{\varphi}
\title{Some observations on the connection between Stirling numbers and Bessel numbers}
\author{David Stenlund \\[0.5em]
	{\small \AA bo Akademi University, Turku, Finland} \\
	{\small \ttfamily david.stenlund@abo.fi}
}
\date{\today}
\begin{document}

\maketitle

\begin{abstract}
We present new proofs for some summation identities involving Stirling numbers of both first and second kind. The two main identities show a connection between Stirling numbers and Bessel numbers. Our method is based on solving a particular recurrence relation in two different ways and comparing the coefficients in the resulting polynomial expressions. We also briefly discuss a probabilistic setting where this recurrence relation occurs. 
\vspace{2ex}\\
Keywords: Stirling numbers, Bessel numbers, Bessel processes
\vspace{2ex}\\
Mathematics Subject Classification (2010): 11B73, 11B83, 05A19, 60J60
\end{abstract}

\section{Introduction}

Stirling numbers of the first and second kind are well-known numbers that are found in numerous combinatorial problems. In this short paper, we consider two identities that connect a sum containing both kinds of Stirling numbers with either the first or the second kind of the Bessel numbers. We present a new proof for these identities, together with some remarks on where one of them is found in the context of the positive occupation time for a skew Brownian motion. 

\emph{Unsigned Stirling numbers of the first kind}~$\stirlingone{n}{k}$ and \emph{Stirling numbers of the second kind}~$\stirlingtwo{n}{k}$ are defined recursively for all $n,k\in\ZZ$ through
\begin{equation*}
\stirlingone{n+1}{k} = n \stirlingone{n}{k} + \stirlingone{n}{k-1} \quad \text{and} \quad \stirlingtwo{n+1}{k} = k \stirlingtwo{n}{k} + \stirlingtwo{n}{k-1},
\end{equation*}
with initial conditions
\begin{equation*}
\stirlingone{n}{0} = \stirlingtwo{n}{0} = \delta_{n,0}, \,\quad \stirlingone{0}{k} = \stirlingtwo{0}{k} = \delta_{0,k},
\end{equation*}
where $\delta$ is the Kronecker delta. A combinatorial interpretation of these numbers is that $\stirlingone{n}{k}$ counts the number of permutations of $n$ elements with $k$ disjoint cycles, while $\stirlingtwo{n}{k}$ corresponds to the number of ways to partition a set of $n$ elements into $k$ nonempty subsets. The numbers $(-1)^{n-k}\stirlingone{n}{k}$ are called (signed) Stirling numbers of the first kind. 

Another way of defining the unsigned Stirling numbers of the first kind is that they are the coefficients of a rising factorial,~\cite[Eq.~(6.11)]{GrahamKnuthPatashnik1994}
\begin{equation}\label{eq_s1_rising}
\frac{\Gamma(x+n)}{\Gamma(x)} = \sum_{k=0}^{n} \stirlingone{n}{k} x^k. 
\end{equation}
Similarly, Stirling numbers of the second kind are the coefficients when ordinary powers are expressed using falling factorials,~\cite[Eq.~(6.10)]{GrahamKnuthPatashnik1994}
\begin{equation*}
x^n = \sum_{k=0}^{n} \stirlingtwo{n}{k} \frac{\Gamma(x+1)}{\Gamma(x-k+1)}.
\end{equation*}

The notation for Stirling numbers varies. A rather common notation is $s(n,k)$ and $S(n,k)$ for Stirling numbers of the first and second kind, respectively. This notation is used, for instance, by Comtet~\cite{Comtet1974} and Mansour and Schork~\cite{MansourSchork2016}. However, here we follow the notation used by Graham, Knuth and Patashnik~\cite{GrahamKnuthPatashnik1994} (see also Knuth's remarks on the subject~\cite{Knuth1992}), and refer to their book for basic properties of Stirling numbers, including a number of useful summation identities. 

The \emph{Bessel polynomials} are a sequence of polynomials that are solutions to the second order differential equation
\begin{equation*}
x^2 y_n''(x) + (2x+2) y_n'(x) - n(n+1)y_n(x) = 0,
\end{equation*}
with the normalization $y_n(0)=1$. The first few Bessel polynomials are
\begin{align*}
y_0(x) &= 1, \\
y_1(x) &= x+1, \\
y_2(x) &= 3x^2+3x+1, \\
y_3(x) &= 15x^3+15x^2+6x+1, \\
y_4(x) &= 105x^4+105x^3+45x^2+10x+1.
\end{align*}
Sometimes the polynomials with coefficients in opposite order are used instead. These are consequently called \emph{reverse Bessel polynomials}, and are given by
\begin{equation*}
\theta_n(x) := x^n y_n(1/x).
\end{equation*}
Both these types of Bessel polynomials may also be defined recursively, namely
\begin{equation}\label{eq_Bpolyrec}
\begin{gathered}
y_0(x) = 1, \quad y_1(x) = x+1, \quad y_n(x) = (2n-1)x\,y_{n-1}(x) + y_{n-2}(x), \\[0.1em]
\,\theta_0(x) = 1, \quad \theta_1(x) = x+1, \quad \theta_n(x) = (2n-1)\theta_{n-1}(x) + x^2 \theta_{n-2}(x).
\end{gathered}
\end{equation}
The name Bessel polynomials was first used by Krall and Frink~\cite{KrallFrink1949} due to the connection to Bessel functions. Indeed, the Bessel polynomials can also be written
\begin{equation*}
y_n(x) = \sqrt{\frac{2}{\pi x}}\ee^{1/x} K_{n+\frac{1}{2}}(1/x), 
\end{equation*}
where~$K_\nu(x)$ is a modified Bessel function of the second kind. We refer to Grosswald~\cite{Grosswald1978} for more properties of Bessel polynomials. 

The (signed) \emph{Bessel number of the first kind} $b(n,k)$ is defined as the coefficient before $x^{n-k}$ in the polynomial $y_{n-1}(-x)$.
Explicitly, these numbers are given for $1\leq k\leq n$ by
\begin{equation}\label{eq_defbex}
b(n,k) = (-1)^{n-k}\frac{(2n-k-1)!}{2^{n-k} (k-1)!(n-k)!}. 
\end{equation}
The \emph{Bessel numbers of the second kind} $B(n,k)$ are given for $\lceil \frac{n}{2} \rceil \leq k \leq n$ by
\begin{equation}\label{eq_defbex2}
B(n,k) = \frac{n!}{2^{n-k} (2k-n)!(n-k)!}. 
\end{equation}
These numbers are related through $(-1)^{n-k} B(n,k) = b(k+1,2k-n+1)$, and they form a dual pair in the sense that~\cite[Eq.~(11)--(12)]{YangQiao2011}
\begin{equation}\label{eq_dualBessel}
\sum_{k=m}^n B(n,k) b(k,m) = \sum_{k=m}^n b(n,k) B(k,m) = \delta_{m,n}
\end{equation}
for any $m,n\in\ZZ_+$. A similar duality will be shown also for Stirling numbers, see~\eqref{eq_inversion}. 

The main topic of this paper is sums of the form
\begin{equation*}
\sum_{k=m}^{n} \stirlingone{n}{k}\stirlingtwo{k}{m} z^{k},
\end{equation*}
for certain special values of $z\in\RR$. It has earlier been shown that the general solution can be expressed using \emph{degenerate Stirling numbers}~\cite[Eq.~(24)--(25)]{YangQiao2011} or \emph{generalized Stirling numbers}~\cite[Prop.~8.129]{MansourSchork2016} (see also~\eqref{eq_SSS2} in Section~\ref{section_prob} below). These are two examples of extended versions of the Stirling numbers, and in~\cite{MansourSchork2016} are listed several more variations. Here, however, we will for the most part not consider these generalized versions, but rather use the ordinary Stirling numbers. 

Of particular interest are the following two results, which provide a link between Stirling numbers (of both kinds) and Bessel numbers of either the first or the second kind. The identities are proven by Yang and Qiao in~\cite{YangQiao2011} using exponential Riordan arrays, and in the next section we present an alternative proof. 

\begin{theorem}\label{thm_bss}
For any $n,k\in\ZZ_+$, 
\begin{equation}\label{eq_bss}
\sum_{i=k}^{n} \stirlingone{n}{i}\stirlingtwo{i}{k}(-2)^{n-i} = b(n,k).
\end{equation}
\end{theorem}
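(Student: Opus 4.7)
My plan is to expand a single auxiliary polynomial in two different ways: one expansion yields the left-hand side of \eqref{eq_bss}, while a second, recursive expansion yields $b(n,k)$. Define
$$p_n(x) := \prod_{j=0}^{n-1}(x - 2j) = x(x-2)(x-4)\cdots(x-2n+2), \qquad p_0(x) := 1,$$
so that $p_{n+1}(x) = (x - 2n)\,p_n(x)$. This is the ``particular recurrence'' alluded to in the abstract.

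First, I would substitute $x \mapsto -x/2$ in the rising-factorial formula \eqref{eq_s1_rising} and multiply by $(-2)^n$ to obtain the monomial expansion
$$p_n(x) = \sum_{i=0}^{n} \stirlingone{n}{i}(-2)^{n-i} x^{i}.$$
Then, applying the falling-factorial identity $x^{i} = \sum_{k=0}^{i} \stirlingtwo{i}{k} x^{\underline{k}}$, with $x^{\underline{k}} := x(x-1)\cdots(x-k+1) = \Gamma(x+1)/\Gamma(x-k+1)$, and exchanging the order of summation yields
$$p_n(x) = \sum_{k=0}^{n}\Biggl(\sum_{i=k}^{n}\stirlingone{n}{i}\stirlingtwo{i}{k}(-2)^{n-i}\Biggr) x^{\underline{k}}.$$
The parenthesized coefficient is exactly the left-hand side of \eqref{eq_bss}, so it remains to identify it with $b(n,k)$.

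For the second expansion, write $p_n(x) = \sum_{k} B_{n,k}\, x^{\underline{k}}$ and derive a recurrence for $B_{n,k}$ directly from the recurrence for $p_n$. Using the elementary identity $x\cdot x^{\underline{k}} = x^{\underline{k+1}} + k\, x^{\underline{k}}$, the relation $p_{n+1}(x) = (x-2n) p_n(x)$ becomes, at the level of coefficients,
$$B_{n+1,k} = B_{n,k-1} + (k - 2n)\, B_{n,k}, \qquad B_{0,k} = \delta_{0,k}.$$

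The remaining work is to check that the explicit formula \eqref{eq_defbex} satisfies this same recurrence (with the natural boundary values $b(n,0)=0$ and $b(n,k)=0$ for $k>n$). Substituting \eqref{eq_defbex} into the recurrence and pulling out the common factor $(-1)^{n+1-k}(2n-k)!\,/\,[2^{n+1-k}(k-1)!(n+1-k)!]$ reduces the identity to the arithmetic statement $(k-1) + 2(n-k+1) = 2n-k+1$, which is trivial. An induction on $n$ then gives $B_{n,k} = b(n,k)$, and comparison with the first expansion completes the proof. The only real obstacle is the bookkeeping in this final factorial verification — tracking signs, shifted factorials, and powers of two — but there is no conceptual difficulty.
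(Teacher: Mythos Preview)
Your proof is correct and considerably more direct than the paper's. The paper does not use the polynomial $p_n(x)=\prod_{j=0}^{n-1}(x-2j)$ at all; its ``particular recurrence'' is instead the convolution recurrence \eqref{eq_besselrec} for $P_n(x,z)$, which comes from moments of occupation times of skew Bessel processes. The paper first invokes Lemma~\ref{lemma_bessel} (whose proof, deferred to \cite{SalminenStenlund2020}, is itself nontrivial) to obtain the double-Stirling expression for $P_n(x,z)$, then specializes to $z=-1/2$ and solves the resulting recurrence \eqref{eq_skewrec} a second time by an induction that hinges on the Hagen--Rothe identity, and finally matches coefficients of $x^k$.

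Your argument bypasses all of this machinery: it is a pure change-of-basis computation between monomials and falling factorials, applied to a single explicit product. What the paper's route buys is the probabilistic interpretation and a uniform template --- other values of $z$ in the same recurrence yield Theorem~\ref{thm_bss2}, the inversion formula \eqref{eq_inversion}, and the Lah-number identity. What your route buys is brevity and self-containment: no Hagen--Rothe, no appeal to \cite{SalminenStenlund2020}. Incidentally, your coefficient recurrence $B_{n+1,k}=B_{n,k-1}+(k-2n)B_{n,k}$ is exactly the defining recursion \eqref{eq_defGS} for $\GS_{2;-1}(n,k)=b(n,k)$, so your closing verification simultaneously checks that the generalized-Stirling description of $b(n,k)$ agrees with the closed form \eqref{eq_defbex}. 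One small inaccuracy: your remark that $p_{n+1}=(x-2n)p_n$ is ``the particular recurrence alluded to in the abstract'' is not right --- the abstract refers to \eqref{eq_besselrec} --- but this has no bearing on the argument.
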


\begin{theorem}\label{thm_bss2}
For any $n,k\in\ZZ_+$, 
\begin{equation}\label{eq_bss2}
\sum_{i=k}^{n} \stirlingone{n}{i}\stirlingtwo{i}{k} (-2)^{i-k} = (-1)^{n-k} B(n,k).
\end{equation}
\end{theorem}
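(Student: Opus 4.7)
In analogy with the proof of Theorem~\ref{thm_bss}, I would introduce the auxiliary polynomial
\[
p_n(y) := (2y)(2y+1)(2y+2)\cdots(2y+n-1)
\]
and expand it in the rising-factorial basis $y^{\overline{j}} := y(y+1)\cdots(y+j-1)$ in two different ways. Equating the coefficients in the two expansions will yield~(\ref{eq_bss2}).

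For the first expansion, I would apply~(\ref{eq_s1_rising}) with $x = 2y$ to write $p_n(y) = \sum_i \stirlingone{n}{i}\,2^i y^i$, and then substitute the inversion $y^i = \sum_j (-1)^{i-j}\stirlingtwo{i}{j}\, y^{\overline{j}}$, which comes from plugging $x = -y$ into $x^i = \sum_j \stirlingtwo{i}{j}\,x^{\underline{j}}$ together with $(-y)^{\underline{j}} = (-1)^j y^{\overline{j}}$. The trivial regrouping $2^i(-1)^{i-j} = (-2)^{i-j}\cdot 2^j$ then collects everything into
\[
p_n(y) = \sum_{j} 2^j y^{\overline{j}} \left(\sum_{i=j}^n \stirlingone{n}{i}\stirlingtwo{i}{j}(-2)^{i-j}\right).
\]
For the second expansion, I would pass to the exponential generating function in~$n$. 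The key observation is the algebraic identity $(1-t)^2 = 1-(2t-t^2)$, which gives
\[
\sum_n p_n(y)\,\frac{t^n}{n!} = (1-t)^{-2y} = \bigl(1-(2t-t^2)\bigr)^{-y} = \sum_j \frac{y^{\overline{j}}}{j!}(2t-t^2)^j,
\]
where both outer equalities use the rising-factorial EGF. Expanding $(2t-t^2)^j$ by the binomial theorem and extracting $[t^n/n!]$ recovers $(-1)^{n-j}\,2^j B(n,j)$ as the coefficient of $y^{\overline{j}}$, with the appearance of~(\ref{eq_defbex2}) being a short direct computation.

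Because the $y^{\overline{j}}$ are linearly independent in $\RR[y]$, comparing the coefficients of $2^j y^{\overline{j}}$ in the two expansions gives~(\ref{eq_bss2}). The individual manipulations are all routine; the genuine content of the argument, and what I would expect to be the main hurdle in guessing the proof, is spotting the factorization $(1-t)^{-2y} = (1-(2t-t^2))^{-y}$, which replaces the doubled parameter $2y$ by a quadratic substitution in~$t$ and is precisely the algebraic reason the Bessel numbers of the second kind, rather than ordinary Stirling numbers, appear on the right-hand side.
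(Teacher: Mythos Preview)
Your argument is correct. The first expansion is a routine change of basis, and the second expansion checks out: extracting $[t^n/n!]$ from $(2t-t^2)^j/j!$ gives $(-1)^{n-j}\,\frac{n!}{(n-j)!(2j-n)!}\,2^{2j-n} = (-1)^{n-j}2^j B(n,j)$, so comparing the coefficients of $y^{\overline{j}}$ yields exactly~\eqref{eq_bss2}.

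However, your route is genuinely different from the paper's. The paper does \emph{not} argue via an auxiliary polynomial in $y$ and the EGF identity $(1-t)^{-2y}=(1-(2t-t^2))^{-y}$. Instead, it specializes the recurrence~\eqref{eq_besselrec} at $z=-2$, observes that this collapses to the second-order linear recurrence $P_{n+1}=x(2P_n-P_{n-1})$, solves it explicitly (either via its characteristic equation or by recognizing $P_n(x,-2)=(\sqrt{x})^n T_n(\sqrt{x})$ with $T_n$ a Chebyshev polynomial of the first kind), and then matches the resulting $x$-coefficients against those coming from Lemma~\ref{lemma_bessel}. Your proof is shorter and entirely self-contained---it needs only standard Stirling identities and one binomial expansion---whereas the paper's proof relies on the external input of Lemma~\ref{lemma_bessel} but fits into the paper's unifying theme of extracting identities from the single recurrence~\eqref{eq_besselrec} at various values of~$z$, with the probabilistic interpretation of Section~\ref{section_prob} as motivation. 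In spirit your approach is closer to the exponential-Riordan-array proof of Yang and Qiao cited in the paper, though carried out by hand.
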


We remark that the identities in Theorems~\ref{thm_bss} and~\ref{thm_bss2} are also special cases of the formula \cite[Prop.~16]{MansourSchorkShattuck2012}
\begin{equation}\label{eq_SSS}
\GS_{\frac{s}{\nu};\nu}(n,k) = \sum_{i=k}^{n} \GS_{\frac{s}{\nu-\sigma};\nu-\sigma}(n,i) \GS_{\frac{s+\sigma-\nu}{\sigma};\sigma}(i,k),
\end{equation}
valid for $s\in\RR, \nu\neq0, \sigma>0$, where $\GS_{s,h}(n,k)$ are generalized Stirling numbers defined recursively for $s\in\RR$ and $h\in\CC\setminus \{0\}$ through
\begin{equation}\label{eq_defGS}
\begin{gathered}
\GS_{s;h}(n+1,k) = \GS_{s;h}(n,k-1) + h(k+s(n-k)) \GS_{s;h}(n,k), \\
\GS_{s;h}(n,0) = \delta_{n,0}, \quad \GS_{s;h}(0,k) = \delta_{0,k}.
\end{gathered}
\end{equation}
For more details on generalized Stirling numbers, as well as further references, see Mansour and Schork~\cite{MansourSchork2016}. Comparing the more general recursion above to those for Stirling numbers, we note that $\GS_{1;1}(n,k) = \stirlingone{n}{k}$ and $\GS_{0;1}(n,k) = \stirlingtwo{n}{k}$. Bessel numbers are also special cases of generalized Stirling numbers, namely $\GS_{2;-1}(n,k) = b(n,k)$ and $\GS_{-1;1}(n,k) = B(n,k)$. Using this, and the fact that
\begin{equation*}
\GS_{s;ah}(n,k) = a^{n-k}\GS_{s;h}(n,k),
\end{equation*}
we see that insertion of $(s, \nu, \sigma) = (-2, -1, 1)$ into \eqref{eq_SSS} gives precisely \eqref{eq_bss}, while insertion of $(s, \nu, \sigma) = (-\frac{1}{2}, \frac{1}{2}, 1)$ gives \eqref{eq_bss2}. 

In this paper we present an alternative proof method for the identities in Theorems~\ref{thm_bss} and~\ref{thm_bss2}, which was found independently and initially unknowingly of these previously known results. The identities are proved by showing that both the left hand side and the right hand side, when multiplied by the same factor, are equal to the coefficients before $x^k$ in a polynomial that solves a particular recurrence relation. The proofs are given in Section~\ref{section_proof} and are of a rather elementary nature, although there is some algebra involved. In Section~\ref{section_ex} the same method is used to derive two other known summation identities for Stirling numbers, and a few other possibilities are suggested. 

An additional aim of this paper is to highlight a probabilistic setting where Theorem~\ref{thm_bss} can be applied. The recurrence relation that is used as basis for the proof is in fact a recursive formula for the moments of the occupation time on $[0,\infty)$ for a skew Bessel process.  In Section~\ref{section_prob} we briefly describe this occupation time and explain the interpretation of the result in terms of skew Bessel processes and the special case of skew Brownian motion.

\section{Proof of the main identities}\label{section_proof}

Consider the following polynomial recurrence relation for $n\geq1$ and $x,z\in\RR$:
\begin{equation}\label{eq_besselrec}
P_1(x,z)=x, \quad P_{n+1}(x,z) = x \binom{n+z}{n} - x \sum_{m=1}^{n} \binom{n-m+z}{n-m+1} P_m(x,z).
\end{equation}
The reason behind studying this particular recurrence relation is explained in Section~\ref{section_prob}. Salminen and Stenlund~\cite{SalminenStenlund2020} prove the following solution to the recurrence relation in \eqref{eq_besselrec}. 
\begin{lemma}\label{lemma_bessel}
For any $n\geq 1$,
\begin{equation}\label{eq_besselmom}
P_{n}(x,z) = \sum_{k=1}^{n}\sum_{j=1}^{k} \frac{ (-1)^{j-1} (j-1)!}{(n-1)!} \stirlingone{n}{k} \stirlingtwo{k}{j} x^{j} z^{k-1}
\end{equation}
is a solution to \eqref{eq_besselrec}.
\end{lemma}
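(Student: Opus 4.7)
The plan is to prove \eqref{eq_besselmom} by computing the ordinary generating function
$\tilde{P}(t) := \sum_{n \geq 1} P_{n}(x,z)\, t^{n}$
in two ways: once from the recurrence \eqref{eq_besselrec}, and once from the proposed closed form. Since $P_{n}$ is determined uniquely by \eqref{eq_besselrec}, matching the two expressions as formal power series in $t$ (with coefficients polynomial in $x$ and $z$) establishes the identity coefficient by coefficient.

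For the first computation I substitute $\ell = n-m$ in \eqref{eq_besselrec}. With the notation $a_\ell := \binom{\ell+z}{\ell+1}$ and $b_n := \binom{n+z}{n}$, and the convention $P_{0} := 0$, the recurrence takes the convolutional form
$P_{n+1} = x\, b_n - x\sum_{\ell=0}^{n-1} a_\ell\, P_{n-\ell}$.
Multiplying by $t^{n+1}$ and summing over $n \geq 0$ yields the functional equation
\[
\tilde{P}(t)\,\bigl[1 + x t\, \tilde{a}(t)\bigr] = x t\, \tilde{b}(t),
\]
where $\tilde{a}(t) := \sum_{\ell \geq 0} a_\ell t^\ell$ and $\tilde{b}(t) := \sum_{n \geq 0} b_n t^n$. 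The negative binomial series gives $\tilde{b}(t) = (1-t)^{-z-1}$. For $\tilde{a}(t)$ I use the identity $\binom{\ell+z}{\ell+1} = \frac{z}{\ell+1}\binom{\ell+z}{\ell}$ together with termwise integration of the same series, yielding $\tilde{a}(t) = ((1-t)^{-z} - 1)/t$. Substituting and collecting gives
\[
\tilde{P}(t) = \frac{x t\,(1-t)^{-z-1}}{(1-x) + x(1-t)^{-z}}.
\]

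For the second computation I insert the claimed closed form into $\tilde{P}(t)$ and interchange summations. The factor $1/(n-1)! = n/n!$ converts the sum over $n$ into $t\frac{d}{dt}$ applied to the Stirling-first-kind EGF $\sum_{n} \stirlingone{n}{k} t^{n}/n! = (-\ln(1-t))^{k}/k!$; setting $u := -\ln(1-t)$ (so $e^{u}=1/(1-t)$ and $e^{u}-1=t/(1-t)$) the result is $(e^{u}-1)\,u^{k-1}/(k-1)!$. The sum over $k$ is then evaluated by differentiating the Stirling-second-kind EGF $\sum_k \stirlingtwo{k}{j} v^{k}/k! = (e^{v}-1)^{j}/j!$ once in $v$ and putting $v = zu$. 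The remaining sum over $j$ is a geometric series in $-x(e^{zu}-1)$ (legitimate as a formal series in $t$, since $e^{zu}-1$ has positive $t$-order); summing it and replacing $e^{zu} = (1-t)^{-z}$ recovers exactly the same closed form for $\tilde{P}(t)$ as above.

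The main obstacle is the bookkeeping in the second computation: the two Stirling EGFs must be composed in the right order, and one has to notice that the slightly unusual factor $1/(n-1)!$ is precisely what turns the inner $n$-sum into the clean derivative $(e^{u}-1)u^{k-1}/(k-1)!$, while the substitution $u = -\ln(1-t)$ is what allows the outer variable $t$ and the inner EGF variable $v=zu$ to be chained into a single expression. Everything is carried out at the level of formal power series in $t$ with coefficients in $\RR[x,z]$, so no analytic convergence questions intervene.
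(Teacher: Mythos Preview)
Your proof is correct and takes a genuinely different route from the paper. The paper proves \eqref{eq_besselmom} by direct induction on $n$: it expands the binomial coefficient in the sum of \eqref{eq_besselrec} via \eqref{eq_s1_rising}, inserts the induction hypothesis, and then collapses the resulting quadruple sum using two Stirling-number identities (variants of (6.29) and (6.15) in Graham--Knuth--Patashnik). You instead compute the ordinary generating function $\tilde P(t)=\sum_{n\ge 1}P_n t^n$ twice, once from the recurrence (via the convolutional rewriting and the negative-binomial series, yielding the closed form $xt(1-t)^{-z-1}/\bigl[(1-x)+x(1-t)^{-z}\bigr]$) and once from the proposed double sum (via the exponential generating functions $\sum_n\stirlingone{n}{k}t^n/n!=(-\ln(1-t))^k/k!$ and $\sum_k\stirlingtwo{k}{j}v^k/k!=(e^v-1)^j/j!$, chained through the substitution $u=-\ln(1-t)$, $v=zu$, followed by a geometric sum in $j$). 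Each step checks: the convolution form of the recurrence, the evaluation of $\tilde a(t)=((1-t)^{-z}-1)/t$, the identification $t\,d/dt$ with the factor $n/(n-1)!$, the differentiated second-kind EGF giving $(e^{zu}-1)^{j-1}e^{zu}/(j-1)!$, and the legitimacy of the geometric series since $e^{zu}-1$ has positive $t$-order. The two expressions for $\tilde P(t)$ agree, and since the recurrence determines $P_n$ uniquely the lemma follows. Your approach is shorter and more conceptual, packaging the combinatorics into standard EGFs; the paper's inductive argument is more elementary in that it avoids generating functions entirely, at the cost of heavier sum manipulations and reliance on the two cited Stirling identities.
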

\begin{proof}[Proof (sketch)]
For a more detailed proof we refer to \cite[Thm.~4]{SalminenStenlund2020}. Note that neither of the assumptions $x\in(0,1)$ and $z\in(-1,0)$ that are implicit therein (due to the setting of skew Bessel processes) is necessary for this proof, and thus the result holds without these restrictions on $x$ and $z$. The case $z=0$ is trivial, since $P_{n}(x,0)=x$ for all $n$, and thus we can hereafter assume that $z\neq0$. 
Recall that the result is proved by induction. First, the binomial coefficient inside the sum in~\eqref{eq_besselrec} is rewritten as 
\begin{equation*}
\binom{n-m+z}{n-m+1} = \frac{1}{(n-m+1)!} \sum_{k=m}^{n} \stirlingone{n-m+1}{n-k+1} z^{n-k+1}, 
\end{equation*}
based on~\eqref{eq_s1_rising}. Thereafter, the induction assumption is applied, by inserting the expression in~\eqref{eq_besselmom} for~$P_{m}(x,z)$ in~\eqref{eq_besselrec}, which yields a quadruple sum.  
After changing the summation order, this expression is simplified in a suitable way until the desired form as in \eqref{eq_besselmom} is obtained for $P_{n+1}(x,z)$, and the result follows by induction. Key identities for the simplification steps are
\begin{equation*}
\stirlingone{n+1}{n-j+i+1} \binom{n-j+i}{i-1} = \sum_{k=i}^{j} \stirlingone{k}{i} \stirlingone{n-k+1}{n-j+1} \binom{n}{k-1} 
\end{equation*}
and 
\begin{equation*}
\sum_{i=j}^{k} \stirlingtwo{i}{j} \binom{k}{i-1} = j \stirlingtwo{k+1}{j+1},
\end{equation*}
which are modified versions of equations (6.29) and (6.15) in \cite{GrahamKnuthPatashnik1994}, respectively. 
\end{proof}

The solution to the recurrence relation in \eqref{eq_besselrec} is a polynomial in $x$ and $z$ with Stirling numbers of both first and second kind in the coefficients. For certain fixed values of~$z$, the relation \eqref{eq_besselrec} can be solved in a different way than in the proof of Lemma~\ref{lemma_bessel} to obtain another polynomial expression for the solution. Equating the coefficients in these alternative solutions then leads to a summation identity for Stirling numbers of both kinds. In particular, Theorems~\ref{thm_bss} and~\ref{thm_bss2} can be proved this way. 

\begin{proof}[Proof of Theorem~\ref{thm_bss}]
For this proof we consider the case when $z=-1/2$ (which is also of particular interest due to the probabilistic interpretation presented in Section~\ref{section_prob}). Inserting this value into \eqref{eq_besselrec} and recalling that
\begin{equation*}
\binom{n-\frac{1}{2}}{n} = 2^{-2n}\binom{2n}{n},
\end{equation*}
we get the recurrence relation
\begin{align}\label{eq_skewrec}
P_{n+1}\bigl(x,-\tfrac{1}{2}\bigr) &= \frac{x}{2^{2n}}\binom{2n}{n} + \sum_{k=1}^{n} \frac{x}{2^{2n-2k+1}(n-k+1)} \binom{2n-2k}{n-k} P_{k}\bigl(x,-\tfrac{1}{2}\bigr) \nonumber\\
&= \frac{x}{2^{2n}}\binom{2n}{n} + \sum_{k=0}^{n-1} \frac{x}{2^{2k+1}(k+1)} \binom{2k}{k} P_{n-k}\bigl(x,-\tfrac{1}{2}\bigr).
\end{align}
As this is a special case of \eqref{eq_besselrec}, a solution is given by~\eqref{eq_besselmom} with $z=-1/2$. However, using another proof by induction, we show that for $n\geq 1$, the polynomial
\begin{equation}\label{eq_skewmom}
P_{n}\bigl(x,-\tfrac{1}{2}\bigr) = \sum_{k=0}^{n-1} \binom{n+k-1}{k} \frac{x^{n-k}}{2^{n+k-1}}
\end{equation}
solves \eqref{eq_skewrec}. Clearly, the identity holds for~$n=1$. Assume now that~\eqref{eq_skewmom} holds for all~$n\in\{1,2,\dotsc,N\}$. 
It then follows from~\eqref{eq_skewrec} that 
\begin{align*}
P_{N+1}\bigl(x,-\tfrac{1}{2}\bigr)  
&= \frac{x}{2^{2N}}\binom{2N}{N} + \sum_{i=0}^{N-1} \frac{x}{2^{2i+1}(i+1)} \binom{2i}{i} \sum_{k=0}^{N-i-1} \binom{N-i+k-1}{k} \frac{x^{N-i-k}}{2^{N-i+k-1}} \\
&= \frac{x}{2^{2N}}\binom{2N}{N} + \sum_{i=0}^{N-1}\sum_{k=i}^{N-1} \frac{x^{N+1-k}}{2^{N+k}(i+1)} \binom{2i}{i} \binom{N-2i+k-1}{k-i} \\
&= \frac{x}{2^{2N}}\binom{2N}{N} + \sum_{k=0}^{N-1} \frac{x^{N+1-k}}{2^{N+k}} \sum_{i=0}^{k} \frac{1}{2i+1}\binom{2i+1}{i} \binom{N+k-1-2i}{k-i},
\end{align*}
and the inner sum can be evaluated using the Hagen--Rothe identity
\begin{equation*}
\sum_{k=0}^{n} \frac{a}{a+bk} \binom{a+bk}{k} \binom{c-bk}{n-k} = \binom{a+c}{n}, 
\end{equation*}
which is a special case of Equation~(3.146) in \cite{Gould1972} (see also \cite{Chu2010}). This yields
\begin{align*}
P_{N+1}\bigl(x,-\tfrac{1}{2}\bigr) &= \frac{x}{2^{2N}}\binom{2N}{N} + \sum_{k=0}^{N-1} \binom{N+k}{k} \frac{x^{N+1-k}}{2^{N+k}} \\
&= \sum_{k=0}^{N} \binom{N+k}{k} \frac{x^{N+1-k}}{2^{N+k}},
\end{align*}
and thus, by induction, \eqref{eq_skewmom}~holds for all~$n\geq 1$. On the other hand, inserting $z=-1/2$ into \eqref{eq_besselmom} gives that
\begin{align}\label{eq_besselsol}
P_{n}\bigl(x,-\tfrac{1}{2}\bigr) 
&= \sum_{k=1}^{n} x^{k}\frac{ (-1)^{k-1} (k-1)! }{(n-1)!} \sum_{i=k}^{n} \stirlingone{n}{i} \stirlingtwo{i}{k} ( -2)^{1-i}. 
\end{align}
Equating the coefficients before $x^k$ in \eqref{eq_skewmom} and \eqref{eq_besselsol} we obtain 
\begin{align*}
\frac{ (-1)^{k-1} (k-1)!}{(n-1)!} \sum_{i=k}^{n} \stirlingone{n}{i} \stirlingtwo{i}{k} ( -2)^{1-i} &= \frac{1}{2^{2n-k-1}} \binom{2n-k-1}{n-k}, \quad 1 \leq k \leq n,
\end{align*}
which is equivalent to \eqref{eq_bss}, and the proof is complete. 
\end{proof}

\begin{proof}[Proof of Theorem~\ref{thm_bss2}]
In this case we let $z=-2$. Inserting $n=1$ into \eqref{eq_besselrec} we see that
\begin{equation*}
P_2(x,-2) = - x -x(-2 P_1(x,-2)) = 2x^2-x,
\end{equation*}
which becomes our second initial condition, together with $P_1(x,-2)=x$. For any $n\geq3$ the first term disappears altogether, and only two terms are left in the sum. The recurrence relation thus becomes
\begin{equation}\label{eq_Brec}
P_{n+1}(x,-2) = x\bigl(2P_{n}(x,-2)-P_{n-1}(x,-2)\bigr). 
\end{equation}
This recurrence relation can be solved in a standard way. The characteristic equation
\begin{equation*}
1-2u+\frac{u^2}{x} = 0
\end{equation*}
has the roots $u=x\pm \sqrt{x(x-1)}$. Note that we are only using the variable $x$ for comparing the polynomial coefficients, so we can assume here that $x>1$. A solution to the recurrence relation is then 
\begin{equation*}
P_{n}(x,-2) = c_1 \Bigl(x+ \sqrt{x(x-1)}\Bigr)^n + c_2 \Bigl(x- \sqrt{x(x-1)}\Bigr)^n,
\end{equation*}
and from the initial conditions it is seen that $c_1=c_2=\frac{1}{2}$. Expanding both binomial expressions as sums and combining them we obtain
\begin{align}
P_{n}(x,-2) &= \sum_{i=0}^n \binom{n}{i} x^{n-i} \bigl(x(x-1)\bigr)^\frac{i}{2} \left(\frac{1+(-1)^i}{2}\right) \nonumber\\
&= \sum_{m=0}^{\lfloor \frac{n}{2} \rfloor} \binom{n}{2m} x^{n-m} (x-1)^m \nonumber\\
&= \sum_{m=0}^{\lfloor \frac{n}{2} \rfloor} \sum_{k=0}^{m} (-1)^k \binom{n}{2m} \binom{m}{k} x^{n-k} \nonumber\\
&= \sum_{k=0}^{\lfloor \frac{n}{2} \rfloor} (-1)^k 2^{n-2k-1} \binom{n-k}{k}\frac{n}{n-k} x^{n-k} \nonumber\\
&= \frac{n}{2} \sum_{k=\lceil \frac{n}{2} \rceil}^{n} (-1)^{n-k} \frac{(k-1)! \, 2^{2k-n}}{(n-k)! (2k-n)!} x^{k} , \label{eq_Bsol}
\end{align}
where in the second to last step the identity \cite[Eq. (3.120)]{Gould1972}
\begin{equation*}
\sum_{m=k}^{\lfloor \frac{n}{2} \rfloor} \binom{n}{2m} \binom{m}{k} = 2^{n-2k-1} \binom{n-k}{k}\frac{n}{n-k}
\end{equation*}
is used. An alternative way of obtaining \eqref{eq_Bsol} is to recognize the similarity between the recurrence relation in~\eqref{eq_Brec} and that of the Chebyshev polynomials of the first kind $T_n(x)$. These polynomials can be defined recursively through
\begin{equation*}
T_0(x)=1, \quad T_1(x)=x, \quad T_{n+1}(x) = 2x T_n(x) -T_{n-1}(x). 
\end{equation*}
In fact, we see that $P_{n}(x,-2) = (\sqrt{x})^n T_n(\sqrt{x})$ satisfies \eqref{eq_Brec}, and from the explicit expression for Chebyshev polynomials~\cite[(22.3.6)]{AbramowitzStegun1966} we then get \eqref{eq_Bsol}. 
What remains of the proof is just to compare the coefficients before $x^k$ in~\eqref{eq_besselmom} when $z=-2$ with those in~\eqref{eq_Bsol}. This gives
\begin{equation*}
\frac{ (-1)^{k-1} (k-1)!}{(n-1)!} \sum_{i=k}^{n} \stirlingone{n}{i} \stirlingtwo{i}{k} ( -2)^{i-1} =
\begin{cases}
(-1)^{n-k} \dfrac{n (k-1)! \, 2^{2k-n-1}}{(n-k)! (2k-n)!}, & \lceil \frac{n}{2} \rceil \leq k \leq n, \\
0, & \mathrm{otherwise},
\end{cases}
\end{equation*}
which is equivalent to \eqref{eq_bss2}. 
\end{proof}

\section{Extension to other identities}\label{section_ex}

As we have seen in the previous section, the choice of $z=-1/2$ or $z=-2$ in~\eqref{eq_besselrec} results in an identity connecting Stirling numbers with Bessel numbers of the first or second kind, respectively. Here we consider a couple of other choices of~$z$ that give rise to similar summation identities. The case when $z=-1$ is rather trivial, since 
\begin{equation*}
P_{1}(x,-1) = x, \quad P_{n+1}(x,-1) = x P_{n}(x,-1),
\end{equation*}
and thus $P_{n}(x,-1) = x^n$ for all $n\geq1$. Comparing this to \eqref{eq_besselmom} with $z=-1$ leads to the well-known inversion formula for Stirling numbers, 
\begin{equation}\label{eq_inversion}
\sum_{i=k}^{n} \stirlingone{n}{i}\stirlingtwo{i}{k} (-1)^{n-i} = \delta_{n,k}, 
\end{equation}
which is given, for instance, in \cite[p.~264]{GrahamKnuthPatashnik1994}. Compare this to the corresponding identity~\eqref{eq_dualBessel} for Bessel numbers. 

When $z$ is a positive integer, all of the binomial coefficients in \eqref{eq_besselrec} are also positive integers, and thereby the polynomials~$P_{n}(x,z)$ have integer coefficients. A fairly simple example is $z=1$, which leads to the recurrence relation
\begin{equation*}
P_{1}(x,1) = x, \quad P_{n+1}(x,1) = (n+1) x - x \sum_{k=1}^n P_{k}(x,1). 
\end{equation*}
In this case it is easy to prove by induction that for all $n\geq 1$,
\begin{equation*}
P_{n}(x,1) = 1-(1-x)^n = \sum_{k=1}^n (-1)^{k+1} \binom{n}{k} x^k.
\end{equation*}
When compared to \eqref{eq_besselmom} this yields another known identity, namely
\begin{equation*}
\sum_{i=k}^{n} \stirlingone{n}{i}\stirlingtwo{i}{k} = \frac{(n-1)!}{(k-1)!}\binom{n}{k} =: L(n,k),
\end{equation*}
where $L(n,k)$ are so called \emph{Lah numbers}~\cite[p.~156]{Comtet1974}. 

The method presented here could potentially be used to find identities for other values of $z$ as well. For instance, finding solutions for other positive integer values $\geq  2$ does not seem improbable, although the recurrence relations are not quite as simple as when~$z=1$. 
When $z$ is a negative integer~$-m$, many of the terms in~\eqref{eq_besselrec} disappear. The binomial coefficient in the first term becomes
\begin{equation*}
\binom{n-m}{n} =
\begin{cases}
\displaystyle
(-1)^n \binom{m-1}{n}, & 0 \leq n < m, \\
0, & 0 < m \leq n,
\end{cases}
\end{equation*}
using a well-known formula for negative first index of a binomial coefficient \cite[Eq. (5.14)]{GrahamKnuthPatashnik1994}. Similarly, the binomial coefficient inside the sum in~\eqref{eq_besselrec} is zero whenever $k\leq n-m$. In fact, after the first $m$ steps of the recursion, we are left with a (homogeneous) linear recurrence relation of order $m$. An example of this has already been illustrated in the proof of Theorem~\ref{thm_bss2} with $z=-2$. For negative integer values $z<-2$ the recurrence relations are of higher order,  and thus it is also more difficult in these cases to find the solution and express it as a polynomial in $x$, in order to compare the coefficients. 

Finally, solving the recurrence relation for half integer values of $z$ seems a viable possibility, as it has already been done for~$z=-1/2$ in the proof of Theorem~\ref{thm_bss}. Although a solution has not yet been found for other half integer values, we will certainly investigate these cases further.

\section{A probabilistic interpretation}\label{section_prob}

The significance of the recursive equation \eqref{eq_besselrec} is that it arises in the study of the occupation time on $[0,\infty)$ for skew Bessel diffusion processes. More precisely, let $(X_t)_{t\geq 0}$ be a skew Bessel process~\cite{BarlowPitmanYor1989arcsinus, SalminenStenlund2020, Watanabe1995} starting at $X_0=0$ with skewness parameter $x\in(0,1)$ and parameter $z\in(-1,0)$ related to the dimension ($d=2+2z$), and let further $A_t$ be the occupation time on $[0,\infty)$ of the process up to time $t$, 
\begin{equation*}
A_t := \int_0^t \idop_{[0,\infty)}(X_s) \diff s. 
\end{equation*}
Then, the $n$th moment $\EE(A_1^n)$ of the occupation time up to time~$1$ is equal to~$P_{n}(x,z)$, which is given recursively by~\eqref{eq_besselrec} and explicitly in~\eqref{eq_besselmom}, as shown in \cite[Thms.~3,~4]{SalminenStenlund2020}. 
Note that, due to self-similarity of the Bessel processes, it follows that $\EE(A_t^n)=t^n P_{n}(x,z)$ for all~$t\geq 0$. 

The special case when $z=-1/2$ corresponds to a skew Brownian motion. This is precisely the case considered in the proof of Theorem~\ref{thm_bss}. In fact the proof was attained by first solving the recurrence relation~\eqref{eq_skewrec} for a skew Brownian motion and later using another path to solve the more general recurrence for skew Bessel processes. Comparing the results then led to the interesting identity \eqref{eq_bss}. After some research this identity was found in literature \cite{MansourSchorkShattuck2012, YangQiao2011}, although therein proved using different approaches. 

The moments of $A_1$ for a skew Brownian motion with skewness parameter $x\in(0,1)$ is thus given by~\eqref{eq_skewmom}, which alternatively can be expressed using Bessel numbers as
\begin{equation}\label{eq_sBMbess}
\EE(A_1^n) = \frac{1}{2^{n-1} (n-1)!} \sum_{k=1}^{n} (-1)^{n-k} (k-1)! \, x^{k} \, b(n,k). 
\end{equation}
The sum somewhat resembles a modified version of an inverse Bessel polynomial, since 
\begin{equation*}
x \,\theta_{n-1}(x) = \sum_{k=1}^n (-1)^{n-k} x^{k} \, b(n,k).
\end{equation*}
However, the recurrence for $\EE(A_1^n)$ as given in \eqref{eq_skewrec} is naturally very different from the simple recurrence for inverse Bessel polynomials in \eqref{eq_Bpolyrec}. 

As noted in the introduction, Theorem~\ref{thm_bss} is also a special case of \eqref{eq_SSS}. If we let $\sigma=1$ and $\nu=s+1$ this equation becomes
\begin{equation*}
\GS_{\frac{s}{s+1};s+1}(n,k) = \sum_{i=k}^{n} \GS_{1;s}(n,i) \GS_{0;1}(i,k) = \sum_{i=k}^{n} s^{n-i}\stirlingone{n}{i} \stirlingtwo{i}{k},
\end{equation*}
and, writing $z=s^{-1}$, we obtain 
\begin{equation}\label{eq_SSS2}
\sum_{i=k}^{n} \stirlingone{n}{i} \stirlingtwo{i}{k}z^{i} = z^n \GS_{\frac{1}{z+1};\frac{z+1}{z}}(n,k).
\end{equation}
Thus, for any skew Bessel process with parameter $z\in(-1,0)$, the moments of the positive occupation time can alternatively be expressed using generalized Stirling numbers as
\begin{align*}
\EE(A_1^n) &= \sum_{k=1}^{n} \frac{ (-1)^{k-1} (k-1)! \, x^{k}}{x(n-1)!} \sum_{i=k}^{n} \stirlingone{n}{i} \stirlingtwo{i}{k} z^{i} \\
&= \frac{z^{n-1}}{(n-1)!} \sum_{k=1}^{n} (-1)^{k-1} (k-1)! \, x^{k} \GS_{\frac{1}{z+1};\frac{z+1}{z}}(n,k),
\end{align*}
which is of a similar form as \eqref{eq_sBMbess}. This illustrates how Bessel numbers occur in the moments of occupation times on $[0,\infty)$ for a skew Brownian motion, while generalized Stirling numbers are, correspondingly, found in the more general case of skew Bessel processes.

\newpage

\subsection*{Acknowledgments} 
The research of D.~Stenlund was supported by a grant from the Magnus Ehrnrooth foundation.


\begin{thebibliography}{10}

\bibitem{AbramowitzStegun1966}
Abramowitz, M. and Stegun, I.~A. (1966).
\newblock {\em Handbook of mathematical functions, with formulas, graphs, and
  mathematical tables}.
\newblock Dover Publications, Inc., New York. 

\bibitem{BarlowPitmanYor1989arcsinus}
Barlow, M., Pitman, J. and Yor, M. (1989).
\newblock Une extension multidimensionnelle de la loi de l'arc sinus.
\newblock In: {\em S\'{e}minaire de {P}robabilit\'{e}s, {XXIII}}.
\newblock {\em Lecture Notes in Math.}, Vol.~1372, pp.~294--314. Springer, Berlin. 

\bibitem{Chu2010}
Chu, W. (2010).
\newblock Elementary proofs for convolution identities of {A}bel and
  {H}agen-{R}othe.
\newblock {\em Electron. J. Combin.\/} {\bf 17,} Note 24, 5. 

\bibitem{Comtet1974}
Comtet, L. (1974).
\newblock {\em Advanced combinatorics} 
\newblock D. Reidel Publishing Company, Dordrecht. 

\bibitem{Gould1972}
Gould, H.~W. (1972).
\newblock {\em Combinatorial identities}.
\newblock Henry W. Gould, Morgantown. 

\bibitem{GrahamKnuthPatashnik1994}
Graham, R.~L., Knuth, D.~E. and Patashnik, O. (1994).
\newblock {\em Concrete mathematics}, 2nd~ed.
\newblock Addison-Wesley Publishing Company, Reading.

\bibitem{Grosswald1978}
Grosswald, E. (1978).
\newblock {\em Bessel polynomials}. {\em Lecture Notes in
  Math.}, Vol.~698. 
\newblock Springer, Berlin. 

\bibitem{Knuth1992}
Knuth, D.~E. (1992).
\newblock Two notes on notation.
\newblock {\em Amer. Math. Monthly\/} {\bf 99,} 403--422. 

\bibitem{KrallFrink1949}
Krall, H.~L. and Frink, O. (1949).
\newblock A new class of orthogonal polynomials: {T}he {B}essel polynomials.
\newblock {\em Trans. Amer. Math. Soc.\/} {\bf 65,} 100--115. 

\bibitem{MansourSchork2016}
Mansour, T. and Schork, M. (2016).
\newblock {\em Commutation relations, normal ordering, and {S}tirling numbers}.
\newblock 
CRC Press, Boca Raton. 

\bibitem{MansourSchorkShattuck2012}
Mansour, T., Schork, M. and Shattuck, M. (2012).
\newblock The generalized {S}tirling and {B}ell numbers revisited.
\newblock {\em J. Integer Seq.\/} {\bf 15,} Article 12.8.3, 47. 

\bibitem{SalminenStenlund2020}
Salminen, P. and Stenlund, D. (2020).
\newblock On occupation times of one-dimensional diffusions.
\newblock {\em J. Theor. Probab.\/}
\newblock https://doi.org/10.1007/s10959-020-00993-3

\bibitem{Watanabe1995}
Watanabe, S. (1995).
\newblock Generalized arc-sine laws for one-dimensional diffusion processes and
  random walks.
\newblock In: Cranston, M.~C. and Pinsky, M.~A. (eds.) {\em Stochastic analysis ({I}thaca, {NY}, 1993)}.
\newblock {\em Proc. Sympos. Pure Math.} Vol.~57, pp.~157--172. Amer. Math. Soc.,
  Providence. 

\bibitem{YangQiao2011}
Yang, S.~L. and Qiao, Z.~K. (2011).
\newblock The {B}essel numbers and {B}essel matrices.
\newblock {\em J. Math. Res. Exposition\/} {\bf 31,} 627--636. 

\end{thebibliography}
\end{document}